\begin{document}

\title{Well-posedness of the two-component Fornberg-Whitham system in Besov spaces}
\author{
{Prerona Dutta} 
\thanks{Department of Mathematics, The Ohio State University: dutta.105@osu.edu.
}
}
\markboth{Dutta}{Fornberg-Whitham System in Besov spaces}

\maketitle

\begin{abstract}
The present paper establishes well-posedness for the two-component Fornberg-Whitham system in Besov spaces. First the existence and uniqueness of its solution is proved, then it is shown that the corresponding data-to-solution map is continuous, provided the initial data belong to Besov spaces.\\

\noindent
{\sc Keywords.} Besov space, Fornberg-Whitham system, well-posed.\\

\noindent
{\sc AMS subject classification.} Primary: 35Q35 ; Secondary: 35B30.
\end{abstract}

\begin{center}  
{\em \today}\end{center}

\section{Introduction}\label{sec1}

In this paper, we consider the following two-component Fornberg-Whitham (FW) system for a fluid
\begin{equation}\label{FW0}
\begin{cases}
u_t + uu_x = \partial_x\left(1-\partial_x^2\right)^{-1}\left(\eta - u\right)~,~~ (t,x)\in\mathbb{R}^{+}\times\mathbb{R}\\
\eta_t + \left(\eta u\right)_x = 0\\
\left(u, \eta\right)(0, x) = \left(u_0, \eta_0\right)(x)~,~~x\in \mathbb{R}
\end{cases}
\end{equation}
where $u=u(x,t)$ is the horizontal velocity of the fluid and $\eta = \eta(x,t)$ is the height of the fluid surface above a horizontal bottom. This system was first proposed in \cite{FTYY}, where the authors studied the bifurcations of its travelling wave solutions. The Fornberg-Whitham (FW) equation
\begin{equation}\label{fw}
u_t - u_{xxt} + u_x + uu_x = uu_{xxx} + 3u_xu_{xx}
\end{equation}
was derived in \cite{FW} as a model to study waves on shallow water surfaces. To obtain this, Fornberg and Whitham considered the integro-differential equation
\begin{equation}\label{wkdv}
u_t + uu_x + \int_{-\infty}^{\infty} K(x-\xi)u_{\xi}(\xi, t)d\xi = 0
\end{equation}
which was proposed by Whitham in \cite{W} as a better model for examining short wave phenomena. The kernel $K(x)$ can be chosen to obtain any required dispersion relation, by viewing it as the Fourier transform of the phase velocity. To investigate the qualitative behavior of wave breaking in the context of water waves, \eqref{fw} was derived in \cite{FW} by choosing $K(x) = \frac{1}{2}\nu e^{-\nu|x|}$, where $\nu$ is an adjustable parameter. 
\medskip

\noindent The FW equation continued to receive increasing attention over the years, as the study of travelling wave solutions became more important in research for several areas of Physics. Motivated by the generation of the two-component Camassa–Holm (CH) equation in \cite{CI} and the two-component Degasperis Procesi equation in \cite{FTZ}, the authors of \cite{FTYY} generalized equation \eqref{fw} to obtain the two-component FW system \eqref{FW0}. The hydrodynamical derivation for the two-component CH system in \cite{CI} required that $u(x,t) \to 0$ and $\eta(x,t) \to 1$ as $|x| \to \infty$ at any instant $t$. This holds for the system \eqref{FW0} as well, since \eqref{FW0} was derived in \cite{FTYY} following the same procedure as in \cite{CI}. The system \eqref{FW0} was further investigated in \cite{FTYY} and the authors provided parametric expressions for its smooth soliton solution, kink solution, antikink solution and uncountably infinite smooth periodic wave solutions.
\medskip

\noindent The well-posedness of the FW equation and its Cauchy problem in Sobolev spaces $H^s$ for $s > \frac{3}{2}$ have been established in \cite{EEP} and \cite{H} respectively. Furthermore, in \cite{HT} Holmes and Thompson found that this equation is well-posed in Besov spaces $B^s_{2,r}$, where $s$ is a Sobolev exponent and $r$ is related to a H\"older exponent, and that the data-to-solution map is not uniformly continuous in this case. They also proved existence and uniqueness of real analytic solutions for this equation and provided a blow-up criterion for solutions.
\medskip

\noindent The FW equation belongs to the family of nonlinear wave equations described by
\begin{equation}\label{nlw}
u_t + u u_x = \mathcal{L} (u, u_x)
\end{equation}
which has been studied extensively in existing literature. For $\mathcal{L}(u) = -\frac{1}{6}\partial_x ^3 u$, \eqref{nlw} becomes the KdV equation, while for $\mathcal{L} (u) = -(I -\partial_x ^2)^{-1}\partial_x(u^2 + \frac{1}{2}u_x^2)$, it is the Camassa-Holm (CH) equation. The two-component system \eqref{FW0} captures several features of surface waves in an incompressible fluid, e.g. nonlocality and wave breaking, which the KdV equation does not. Therefore it is useful to look into properties of the FW system (equation) as an alternative to the KdV equation for water waves. 
\medskip

\noindent Establishing well-posedness of strong solutions for the system \eqref{FW0} in various spaces is a challenging problem. In \cite{XZL}, the authors proved local well-posedness of the FW system with initial data in $H^s \times H^{s-1}$ for $s>\frac{3}{2}$ and presented a blow-up criterion by a local-in-time a priori estimate. They also imposed sufficient conditions on the initial data that may lead to wave breaking and analytically demonstrated the existence of periodic traveling waves using a local bifurcation theorem. 
\medskip

\noindent Our objective is to establish local well-posedness of the two component FW system in Besov spaces $B^s_{p,r} \times B^{s-1}_{p,r}$ for $s > \max\{2+\frac{1}{p}, \frac{5}{2}\}$, where $s$ is a Sobolev exponent, $p$ is an ${\bf{L}}^p$ space exponent and $r$ is related to a H\"older exponent. Besov spaces are an interesting class of functions of growing relevance in the study of nonlinear partial differential equations as they generalize Sobolev spaces and are more effective at measuring regularity properties of functions. To examine the FW system in Besov spaces, we define $\rho = \eta - 1$ and study the following system:
\begin{equation}\label{FW}
\begin{cases}
u_t + uu_x = \partial_x\left(1-\partial_x^2\right)^{-1}\left(\rho - u\right)~,~~ (t,x)\in\mathbb{R}^{+}\times\mathbb{R}\\
\rho_t + u\rho_x + \rho u_x + u_x = 0\\
\left(u, \rho\right)(0, x) = \left(u_0, \rho_0\right)(x)~,~~x\in \mathbb{R}
\end{cases}
\end{equation}
Now, $\rho(x,t) \to 0$ as $|x| \to \infty$. For our problem to be well-posed in the sense of Hadamard, we must show existence and uniqueness of the solution to the system \eqref{FW} and also continuity of the data-to-solution map when initial data belongs to the aforementioned Besov spaces. To prove existence, we consider a sequence of approximate solutions to \eqref{FW} and construct a system of linear transport equations. First we show that the solutions to this system are uniformly bounded on a common lifespan. Then using a compactness argument, we extract a subsequence that converges to a solution of the original system \eqref{FW}. Uniqueness and continuous dependence on initial data follow using an approximation argument similar to that in \cite{HT} which dealt with the single FW equation, with appropriate modifications required upon adding $\rho$ to the model.
\medskip

\noindent This paper is organized as follows. In Section \ref{sec2}, we state important definitions and properties related to Besov spaces and linear transport equations. Section \ref{sec3} begins with the main existence result Theorem \ref{existmain} for the FW system \eqref{FW}. The proof involves finding the minimum lifespan, which is presented in Lemma \ref{lifespan}. Uniqueness of the solution is verified in Proposition \ref{uniqueness}. Finally we prove continuity of the data-to-solution map when the initial data belong to $B^s_{p,r} \times B^{s-1}_{p,r}$ for $s > \max\{2+\frac{1}{p}, \frac{5}{2}\}$, thus establishing local well-posedness for the two-component FW system.

\section{Preliminaries}\label{sec2}

This section is a review of relevant definitions and results on Besov spaces and linear transport equations, from \cite{BCD, D1, D2}, that will be used throughout the rest of this paper. Additionally, it describes some analytical tools used in Section \ref{sec3}. 

\subsection{Besov spaces}

Let $\mathcal{S}(\R)$ denote the Schwartz space of smooth functions on $\mathbb{R}$ whose derivatives of all orders decay at infinity. Then the set $\mathcal{S}'(\R)$ of temperate distributions is the dual set of $\mathcal{S}(\R)$ for the usual pairing.\\

\begin{lemma}[Littlewood-Paley decomposition]
There exists a pair of smooth radial functions $(\chi, \varphi)$ taking values in $[0,1]$ such that $\chi$ is supported in the ball $B = \{\xi \in \R~\big|~ |\xi|\leq \frac{4}{3}\}$ and $\varphi$ is supported in the ring $C = \{\xi \in \R~\big|~  \frac{3}{4}\leq|\xi|\leq \frac{8}{3}\}$. Moreover, for all $\xi \in \R$,
$$\chi(\xi) + \sum\limits_{q\in\mathbb{N}}\varphi(2^{-q}\xi) = 1$$
and
\begin{align*}
&\mathrm{supp}~\varphi (2^{-q}\cdot) \cap \mathrm{supp}~\varphi (2^{-q'}\cdot) = \emptyset~~,~~\text{if}~|q-q'|\geq 2~,\\&\mathrm{supp}~\chi (\cdot) \cap \mathrm{supp}~\varphi (2^{-q}\cdot) = \emptyset~~,~~\text{if}~|q|\geq 1 ~.
\end{align*}
\end{lemma}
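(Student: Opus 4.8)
The plan is to construct the pair $(\chi, \varphi)$ explicitly rather than to argue abstractly for its existence. First I would build $\chi$ directly: choose a radial, smooth function $\chi \colon \R \to [0,1]$ that is identically $1$ on the smaller ball $\{|\xi| \leq \frac{3}{4}\}$, vanishes outside $B = \{|\xi| \leq \frac{4}{3}\}$, and is radially non-increasing. Such a $\chi$ is standard and can be obtained, for instance, by mollifying the indicator function of an intermediate ball, or by composing a fixed smooth one-dimensional step function with $|\xi|$; the radii $\frac{3}{4}$ and $\frac{4}{3}$ are chosen precisely so that the dilate defined next lands in the prescribed ring. I would then \emph{define}
\begin{equation*}
\varphi(\xi) := \chi\left(\tfrac{\xi}{2}\right) - \chi(\xi),
\end{equation*}
which is automatically smooth and radial, and reduce the whole lemma to verifying three properties of this single formula.

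The verification then proceeds in three routine steps. \textbf{Support and range of $\varphi$:} since $\chi(\xi) = 1 = \chi(\frac{\xi}{2})$ for $|\xi| \leq \frac{3}{4}$ and $\chi(\xi) = 0 = \chi(\frac{\xi}{2})$ for $|\xi| \geq \frac{8}{3}$, the function $\varphi$ vanishes off the ring $C = \{\frac{3}{4} \leq |\xi| \leq \frac{8}{3}\}$; the monotonicity of $\chi$ gives $\chi(\frac{\xi}{2}) \geq \chi(\xi)$, so that $0 \leq \varphi \leq 1$. \textbf{Partition of unity:} writing $\varphi(2^{-q}\xi) = \chi(2^{-q-1}\xi) - \chi(2^{-q}\xi)$ and summing over $0 \leq q \leq N$ telescopes to $\chi(2^{-N-1}\xi) - \chi(\xi)$; letting $N \to \infty$ and using $\chi(0) = 1$ together with continuity yields $\sum_{q \in \mathbb{N}} \varphi(2^{-q}\xi) = 1 - \chi(\xi)$, which rearranges into the claimed identity.

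Finally, the disjointness conditions follow by reading off the explicit supports: $\varphi(2^{-q}\cdot)$ is supported in the annulus $\{2^q \cdot \frac{3}{4} \leq |\xi| \leq 2^q \cdot \frac{8}{3}\}$. For $|q - q'| \geq 2$ the outer radius $2^q \cdot \frac{8}{3}$ of the lower annulus falls strictly below the inner radius $2^{q'} \cdot \frac{3}{4} \geq 2^{q+2} \cdot \frac{3}{4} = 2^q \cdot 3$ of the higher one, because $\frac{8}{3} < 3$; and for $|q| \geq 1$ the inner radius $2^q \cdot \frac{3}{4} \geq \frac{3}{2}$ already exceeds the outer radius $\frac{4}{3}$ of $\mathrm{supp}\,\chi$. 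I do not anticipate a genuine obstacle here, since this is the classical dyadic construction; the only point demanding care is the bookkeeping of radii — the gap $\frac{8}{3} < 3$ and the margin $\frac{3}{2} > \frac{4}{3}$ are exactly what the specific constants $\frac{4}{3}, \frac{3}{4}, \frac{8}{3}$ are engineered to provide, so the main task is to pin down the cutoff radii for $\chi$ so that each of these inequalities is strict.
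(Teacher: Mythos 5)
Your construction is correct and is precisely the classical dyadic partition of unity; the paper states this lemma without proof, citing \cite{BCD, D1, D2}, and the argument given in those references is exactly the one you describe (take $\chi$ radial, non-increasing, equal to $1$ on $\{|\xi|\le \frac34\}$ and supported in $\{|\xi|\le\frac43\}$, set $\varphi(\xi)=\chi(\xi/2)-\chi(\xi)$, and telescope). All the radius bookkeeping in your verification checks out, in particular the strict inequalities $\frac83<3$ and $\frac32>\frac43$ that give the two disjointness claims.
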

\noindent Then for $u\in\mathcal{S}'(\R)$, the nonhomogeneous dyadic intervals are defined as follows:
\begin{align*}
&\Delta_q u = 0~,~ \text{if}~q\leq -2\\
&\Delta_{-1} u = \chi(D)u = \mathcal{F}^{-1} \chi \mathcal{F} u\\
&\Delta_q u = \varphi(2^{-q}D)u =  \mathcal{F}^{-1} \varphi(2^{-q}\xi) \mathcal{F} u~,~ \text{if}~q\geq 0~.
\end{align*}
Thus $u = \sum\limits_{q\in\mathbb{Z}} \Delta_q u$ in $\mathcal{S}'(\R)$.\\
\\
\begin{definition}
The low frequency cut-off $S_q$ is defined by
$$S_q u = \sum\limits_{p=-1}^{q-1} \Delta_p u = \chi(2^{-q}D)u = \mathcal{F}^{-1} \chi(2^{-q}\xi) \mathcal{F} u~,~ \text{for all}~ q \in \mathbb{N}~.$$
\end{definition}
\noindent Then we have
\begin{align*}
\Delta_p\Delta_q u &\equiv 0 ~,~\text{if}~|p-q|\geq 2\\
\Delta_q(S_{p-1}u\Delta_p v) &\equiv 0 ~,~\text{if}~|p-q|\geq 5~,~\text{for all}~ u, v \in \mathcal{S}'(\R)~
\end{align*}
and by Young's inequality it also follows that for all $p \in [1, \infty]$,
$$ \|\Delta_q u\|_{{\bf L}^p} \leq  \|u\|_{{\bf L}^p}~~ \text{and}~~  \|S_q u\|_{{\bf L}^p} \leq M\|u\|_{{\bf L}^p}$$
where $M$ is a positive constant independent of $q$.\\

\noindent Using the Littlewood-Paley decomposition we define Besov spaces as follows.\\
\begin{definition}[Besov spaces]
Let $s\in \R$ and $p$, $r$ $\in [1, \infty]$. Then the Besov spaces of functions are defined as
$$B^s_{p,r} \equiv B^s_{p,r} (\R) = \{u \in \mathcal{S}'(\R)~\big|~ \|u\|_{B^s_{p,r} } < \infty\}~,$$
where
$$
\|u\|_{B^s_{p,r} } = 
\begin{cases}
\left(\sum\limits_{q\geq -1}(2^{sq}\|\Delta_q u\|_{{\bf L}^p})^r\right)^{1/r}~,~ \text{if}~ 1\leq r < \infty\\ 
\sup\limits_{q\geq -1} 2^{sq}\|\Delta_q u\|_{{\bf L}^p}~~~~~~~~~~~~,~ \text{if}~ r = \infty
\end{cases}
~.
$$
\\In particular, $B^{\infty}_{p,r} = \bigcap\limits_{s\in\R} B^s_{p,r}$.\\
\end{definition}
\noindent Following are some important properties proved in \cite[Section 2.8]{BCD} and \cite[Section 1.3]{D2} that facilitate the study of nonlinear partial differential equations in Besov spaces.\\
\begin{lemma}\label{besov}
Let $s, s_j \in \R$ and $1\leq p, r, p_j, r_j \leq \infty$, for $j=1,2$. Then the following hold:
\begin{enumerate}
\item[(1)] Topological property: $B^s_{p,r}$ is a Banach space which is continuously embedded in $\mathcal{S}'(\R)$.
\item[(2)] Density: $C^{\infty}_c$ is dense in $B^s_{p,r}$ if and only if $p$ and $r$ are finite.
\item[(3)] Embedding: $B^s_{p_1,r_1} \hookrightarrow B^{s-(\frac{1}{p_1} - \frac{1}{p_2})}_{p_2,r_2}$, if $p_1\leq p_2$ and $r_1\leq r_2$, and
$$B^{s_2}_{p,r_2} \hookrightarrow B^{s_1}_{p,r_1}~~\text{locally compact if}~ s_1 < s_2~.$$
\item[(4)] Algebraic property: For all $s>0$, $B^s_{p,r} \cap {\bf L}^{\infty}$ is a Banach algebra. $B^s_{p,r}$ is a Banach algebra $\Longleftrightarrow$ $B^s_{p,r} \hookrightarrow {\bf L}^{\infty}$ $\Longleftrightarrow$ $s> \frac{1}{p}$ (or $s\geq \frac{1}{p}$ and $r=1$). In particular, $B^{1/p}_{p,1}$ is continuously embedded in $B^{1/p}_{p,\infty} \cap {\bf L}^{\infty}$ and $B^{1/p}_{p,\infty} \cap {\bf L}^{\infty}$ is a Banach algebra.
\item[(5)] For $s > \max\{2+\frac{1}{p}, \frac{5}{2}\}$,
$$ \|fg\|_{B^{s-1}_{p,r}} \leq M \|f\|_{B^{s-1}_{p,r}}\|g\|_{B^{s}_{p,r}}~,$$
where $M$ is a constant that depends on $s$, $p$ and $r$.
\item[(6)] Fatou property: If the sequence $\{f_n\}_{n\in\mathbb{N}}$ is uniformly bounded in $B^{s}_{p,r}$ and converges weakly in $\mathcal{S}'(\R)$ to $f$, then $f\in B^{s}_{p,r}$ and 
$$\|f\|_{B^{s}_{p,r}} \leq  \kappa \liminf\limits_{n\to\infty}\|f_{n}\|_{B^{s}_{p,r}}~$$
where $\kappa$ is a constant that depends on $s$, $p$ and $r$.
\end{enumerate}
\end{lemma}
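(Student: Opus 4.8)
The plan is to derive all six items from the Littlewood-Paley machinery already set up above, supplemented by two standard auxiliary tools: Bernstein's inequalities, which control the ${\bf L}^{p_2}$ norm of a frequency-localized function by its ${\bf L}^{p_1}$ norm together with an explicit power of $2^q$, and Bony's paraproduct decomposition $fg = T_f g + T_g f + R(f,g)$, where $T_f g = \sum_q S_{q-1} f \, \Delta_q g$ and $R(f,g) = \sum_q \Delta_q f \, \widetilde{\Delta}_q g$. I would organize the argument so that the purely functional-analytic items (1), (2) and (6) come first, and the items relying on the product structure, namely (3), (4) and (5), come afterward.

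For (1), I would prove completeness by taking a Cauchy sequence $\{u_n\}$ in $B^s_{p,r}$; since $2^{sq}\|\Delta_q(u_n - u_m)\|_{{\bf L}^p} \le \|u_n - u_m\|_{B^s_{p,r}}$, each block $\{\Delta_q u_n\}_n$ is Cauchy in ${\bf L}^p$ and converges to some $v_q$, and I would check that $u = \sum_q v_q$ defines a temperate distribution with $\Delta_q u = v_q$ and $u_n \to u$ in $B^s_{p,r}$. The continuous embedding into $\mathcal{S}'(\R)$ then follows by pairing $u$ against a test function and using Bernstein together with the rapid decay of the Schwartz seminorms. For (2), when $p,r<\infty$ I would first show $S_q u \to u$ in $B^s_{p,r}$, the tail of the defining series vanishing by dominated convergence (this uses $r<\infty$), and then approximate the band-limited smooth function $S_q u$ in ${\bf L}^p$ by compactly supported smooth functions (this uses $p<\infty$); conversely I would exhibit the standard counterexamples showing density fails when either index is infinite. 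For (6), I would invoke weak lower semicontinuity: for fixed $q$ the block $\Delta_q f_n$ converges to $\Delta_q f$ in $\mathcal{S}'(\R)$, so $\|\Delta_q f\|_{{\bf L}^p} \le \liminf_n \|\Delta_q f_n\|_{{\bf L}^p}$, and applying Fatou's lemma to the $\ell^r$ sum yields the stated norm bound.

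For (3), the embedding in the spatial index is exactly Bernstein's inequality applied blockwise, $\|\Delta_q u\|_{{\bf L}^{p_2}} \lesssim 2^{q(1/p_1 - 1/p_2)}\|\Delta_q u\|_{{\bf L}^{p_1}}$, while the embedding in the summation index uses the elementary inclusion $\ell^{r_1} \hookrightarrow \ell^{r_2}$ for $r_1 \le r_2$; the local compactness for $s_1 < s_2$ I would obtain from a Rellich-type argument after localizing with a cutoff. Items (4) and (5) are where the real work lies, and I expect the product estimate (5) to be the main obstacle. For (4) I would substitute Bony's decomposition and estimate each of the three pieces: the two paraproducts are bounded by $\|f\|_{{\bf L}^\infty}\|g\|_{B^s_{p,r}}$ and $\|g\|_{{\bf L}^\infty}\|f\|_{B^s_{p,r}}$ using the spectral localization of $S_{q-1}f\,\Delta_q g$ and the $S_q$ bound recorded above, while the remainder $R(f,g)$ converges precisely because $s>0$; summing gives the algebra property. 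For (5) the same decomposition is used, but now the two factors carry different regularities $s-1$ and $s$, so the delicate point is to balance the derivative counts in each paraproduct and in the remainder so that every term lands in $B^{s-1}_{p,r}$. The hypothesis $s > \max\{2 + \tfrac{1}{p}, \tfrac{5}{2}\}$ is exactly what guarantees that $f$, $g$ and the relevant low-frequency truncations embed into ${\bf L}^\infty$ with enough room to absorb the loss of one derivative, making all three terms summable; verifying these index inequalities term by term is the technical heart of the argument.
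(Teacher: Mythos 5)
The paper does not prove this lemma at all: it is stated as a collection of known facts with a pointer to \cite[Section 2.8]{BCD} and \cite[Section 1.3]{D2}, so there is no in-paper argument to compare against. Your sketch follows exactly the route those references take --- blockwise Bernstein inequalities for the embeddings, completeness and the Fatou property from the $\ell^r({\bf L}^p)$ structure of the norm, and Bony's decomposition $fg = T_f g + T_g f + R(f,g)$ for the product estimates --- and the outline is sound at every step. The one place where your assessment of the difficulty is off is item (5): you call it ``the technical heart'' and propose a fresh paraproduct analysis with unbalanced regularities, but under the standing hypothesis $s > \max\{2+\tfrac{1}{p},\tfrac{5}{2}\}$ one has $s-1 > 1+\tfrac{1}{p} > \tfrac{1}{p}$, so $B^{s-1}_{p,r}$ is already a Banach algebra by item (4), and combining $\|fg\|_{B^{s-1}_{p,r}} \leq M\|f\|_{B^{s-1}_{p,r}}\|g\|_{B^{s-1}_{p,r}}$ with the embedding $B^{s}_{p,r} \hookrightarrow B^{s-1}_{p,r}$ from item (3) gives (5) in one line. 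Your Bony-based route would also work, but it redoes work that (3) and (4) have already paid for; the only genuinely laborious items are (4) itself and the local compactness claim in (3), and your plan handles both correctly.
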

\begin{definition}
A smooth function $f:\R\to \R$ is said to be an $S^m$-multiplier if for all multi-index $\alpha$, there exists a constant $C_{\alpha}$ such that for all $\xi \in \R$
$$|\partial^{\alpha} f(\xi)|~\leq~ C_{\alpha}(1+|\xi|)^{m-|\alpha|}~.$$
\end{definition}
\begin{proposition}\label{multiplier}
Let $m\in\R$ and $f$ be an $S^m$-multiplier. Then for all $s\in \R$, $1\leq p$ and $r\leq\infty$, the operator $f(D)$ defined for all $u\in\mathcal{S}'(\R)$ as
$$f(D)u = \mathcal{F}^{-1}f\mathcal{F}u$$
is continuous from $B^s_{p,r}$ to $B^{s-m}_{p,r}$.
\end{proposition}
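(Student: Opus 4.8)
The plan is to reduce the claimed mapping property to a single dyadic-block estimate and then prove that estimate by rescaling each block to a fixed annulus. Since $f(D)$ and each $\Delta_q$ are both Fourier multipliers, they commute, so $\Delta_q\bigl(f(D)u\bigr) = f(D)\Delta_q u$ for every $q\geq -1$. Recalling the definition of the Besov norm, it therefore suffices to establish the frequency-localized bound
$$\|f(D)\Delta_q u\|_{{\bf L}^p} \leq C\,2^{mq}\,\|\Delta_q u\|_{{\bf L}^p}$$
uniformly in $q\geq 0$, together with the analogous bound $\|f(D)\Delta_{-1}u\|_{{\bf L}^p}\leq C\|\Delta_{-1}u\|_{{\bf L}^p}$ for the low-frequency block. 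Indeed, once these hold, multiplying by $2^{(s-m)q}$ converts the factor $2^{mq}$ into $2^{sq}$, and taking the $\ell^r$ norm in $q$ gives $\|f(D)u\|_{B^{s-m}_{p,r}}\leq C\|u\|_{B^s_{p,r}}$, which is exactly the asserted continuity.

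To prove the localized bound I would realize $f(D)\Delta_q u$ as a convolution. Fix a smooth $\tilde\varphi$ supported in a slightly larger annulus and equal to $1$ on $\mathrm{supp}\,\varphi$; since $\widehat{\Delta_q u}$ is supported where $\tilde\varphi(2^{-q}\cdot)\equiv 1$, we may insert this cutoff and write
$$f(D)\Delta_q u = h_q * \Delta_q u, \qquad h_q := \mathcal{F}^{-1}\bigl[f\,\tilde\varphi(2^{-q}\cdot)\bigr].$$
Young's inequality then yields $\|f(D)\Delta_q u\|_{{\bf L}^p}\leq \|h_q\|_{{\bf L}^1}\|\Delta_q u\|_{{\bf L}^p}$, so everything comes down to the uniform kernel estimate $\|h_q\|_{{\bf L}^1}\leq C\,2^{mq}$.

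This kernel estimate is the crux of the argument and the main obstacle. I would rescale the frequency variable by $2^q$: writing $g_q(\zeta) := f(2^q\zeta)\tilde\varphi(\zeta)$, which is supported in the fixed annulus $|\zeta|\sim 1$, the scale-invariance of the ${\bf L}^1$ norm of the inverse Fourier transform gives $\|h_q\|_{{\bf L}^1} = \|\mathcal{F}^{-1}g_q\|_{{\bf L}^1}$. Because we are on $\R$, the weight $(1+|x|^2)^{-1}$ is integrable, so by Cauchy--Schwarz and Plancherel
$$\|\mathcal{F}^{-1}g_q\|_{{\bf L}^1} \leq C\bigl(\|g_q\|_{{\bf L}^2} + \|g_q'\|_{{\bf L}^2}\bigr).$$
The $S^m$-multiplier bounds $|f(2^q\zeta)|\leq C(1+2^q|\zeta|)^m$ and $|f'(2^q\zeta)|\leq C(1+2^q|\zeta|)^{m-1}$, evaluated on $|\zeta|\sim 1$ where $1+2^q|\zeta|\sim 2^q$, show that both $\|g_q\|_{{\bf L}^2}$ and $\|g_q'\|_{{\bf L}^2}$ are $O(2^{mq})$: the extra $2^q$ produced by differentiating $f(2^q\zeta)$ is exactly compensated by the lowered exponent $m-1$. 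Hence $\|h_q\|_{{\bf L}^1}\leq C\,2^{mq}$, as needed. The technical point to watch is precisely this balancing of the scaling factor against the decay of the derivative bounds; in one space dimension a single derivative already suffices, since the weight $(1+|x|^2)^{-1}$ is integrable.

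Finally, for the low-frequency block I would argue identically but with the ball cutoff $\chi$ in place of $\varphi$: on the support of $\chi$ one has $|f(\xi)|\leq C(1+|\xi|)^m\leq C$, so the corresponding kernel has ${\bf L}^1$ norm bounded by an absolute constant and $\|f(D)\Delta_{-1}u\|_{{\bf L}^p}\leq C\|\Delta_{-1}u\|_{{\bf L}^p}$; the absence of a $2^{mq}$ gain at $q=-1$ is harmless since it affects only a single term of the series. Collecting the two estimates and summing in $\ell^r$ as described completes the proof.
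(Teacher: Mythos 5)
Your argument is correct. The paper does not prove this proposition at all --- it is quoted as a known result from the cited references (Bahouri--Chemin--Danchin, Section 2.8, and Danchin's lecture notes) --- and the proof given there is essentially the one you reconstruct: commute $f(D)$ with $\Delta_q$, rescale each block to a fixed annulus, and bound the $\mathbf{L}^1$ norm of the rescaled kernel via the weight $(1+|x|^2)^{-1}$, Cauchy--Schwarz and Plancherel, the point being exactly the cancellation you identify between the factor $2^q$ from the chain rule and the improved decay $(1+|\xi|)^{m-|\alpha|}$ of the derivative bounds. Your treatment of the low-frequency block and the final $\ell^r$ summation are likewise the standard ones, so there is nothing to correct.
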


\subsection{Linear transport equation}

Given a linear transport equation, Proposition A.1 in \cite{D1} proves the following estimate for its solution size in Besov spaces.\\
\begin{proposition}\label{danchinLT}
Consider the linear transport equation
\begin{equation}\label{lineartrans}
\begin{cases}
\partial_t f + v \partial_x f = F \\
f(x,0) = f_0(x)
\end{cases}
\end{equation}
where $f_0 \in B^s_{p,r}$, $F \in {\bf L}^1((0, T); B^{s}_{p,r})$ and $v$ is such that $\partial_x v \in {\bf L}^1((0, T); B^{s-1}_{p,r})$. Suppose $f \in {\bf L}^{\infty}((0, T); B^{s}_{p,r}) \cap C([0,T]; \mathcal{S}')$ is a solution to \eqref{lineartrans}. Let $1\leq p \leq \infty$ and either $s> 1+\frac{1}{p}$, $r\in (1, \infty)$ or $s\geq 1+\frac{1}{p}$, $r=1$. Then for some constant $C$ which depends on $s$ and $p$, and $$V(t) = \int_0^t \|\partial_x v(\tau)\|_{B^{s-1}_{p,r}}~d\tau~,$$ we have
\begin{equation}\label{transestimate}
\|f(t)\|_{B^s_{p,r}}~\leq~ e^{CV(t)}\left(\|f_0\|_{B^s_{p,r}} + C\int_0^t  e^{-CV(\tau)}\|F(\tau)\|_{B^s_{p,r}}~d\tau\right)~.
\end{equation}
Moreover, if $r=\infty$ then $f\in C([0,T], B^{s'}_{p,1})$ for all $s'<s$.
\end{proposition}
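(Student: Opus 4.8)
The plan is to reduce the estimate to a family of $\mathbf{L}^p$ energy estimates on the Littlewood--Paley blocks of $f$ and then reassemble them in the Besov norm. First I would apply the localization operator $\Delta_q$ to the transport equation \eqref{lineartrans}. Writing $f_q = \Delta_q f$ and noting that $\Delta_q$ commutes with $\partial_x$ but \emph{not} with $v\,\partial_x$, I obtain the localized equation
\begin{equation*}
\partial_t f_q + v\,\partial_x f_q = \Delta_q F + R_q~, \qquad R_q := v\,\partial_x \Delta_q f - \Delta_q\!\left(v\,\partial_x f\right)~,
\end{equation*}
so that each block solves a transport equation forced by $\Delta_q F$ together with the commutator term $R_q = [\,v\partial_x, \Delta_q\,]f$.

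Next I would run an $\mathbf{L}^p$ energy estimate on this block equation. Multiplying by $|f_q|^{p-2}f_q$ and integrating in $x$, the transport term contributes $-\tfrac1p\int (\partial_x v)\,|f_q|^p\,dx$ after integration by parts, which is controlled by $\tfrac1p\|\partial_x v\|_{{\bf L}^\infty}\|f_q\|_{{\bf L}^p}^p$. Dividing by $\|f_q\|_{{\bf L}^p}^{p-1}$ and integrating in time then yields
\begin{equation*}
\|f_q(t)\|_{{\bf L}^p} \leq \|\Delta_q f_0\|_{{\bf L}^p} + \int_0^t \Big( C\|\partial_x v(\tau)\|_{{\bf L}^\infty}\|f_q(\tau)\|_{{\bf L}^p} + \|\Delta_q F(\tau)\|_{{\bf L}^p} + \|R_q(\tau)\|_{{\bf L}^p}\Big)\,d\tau~.
\end{equation*}
The endpoint $p=\infty$ and the borderline $r=1$ require the usual mild modifications but follow the same scheme.

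The decisive step, and the main obstacle, is the commutator estimate. After multiplying the previous inequality by $2^{qs}$ and taking the $\ell^r$ norm in $q$, every term except the commutator is immediately a Besov norm; the commutator must be bounded by
\begin{equation*}
\Big\| \big(2^{qs}\|R_q\|_{{\bf L}^p}\big)_{q\geq -1}\Big\|_{\ell^r} \leq C\,\|\partial_x v\|_{B^{s-1}_{p,r}}\,\|f\|_{B^s_{p,r}}~.
\end{equation*}
This is precisely where the hypotheses $s>1+\tfrac1p$ (or $s\geq 1+\tfrac1p$ with $r=1$) enter: establishing it requires the Bony paraproduct decomposition of $v\,\partial_x f$ into paraproduct and remainder pieces, followed by a frequency-by-frequency analysis, and these regularity thresholds are exactly what make the relevant dyadic series converge. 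Granting this bound and using Minkowski's inequality to interchange the time integral with the $\ell^r$ summation, I arrive at
\begin{equation*}
\|f(t)\|_{B^s_{p,r}} \leq \|f_0\|_{B^s_{p,r}} + C\int_0^t \|\partial_x v(\tau)\|_{B^{s-1}_{p,r}}\|f(\tau)\|_{B^s_{p,r}}\,d\tau + \int_0^t \|F(\tau)\|_{B^s_{p,r}}\,d\tau~,
\end{equation*}
and Gronwall's inequality applied with $V(t)=\int_0^t\|\partial_x v(\tau)\|_{B^{s-1}_{p,r}}\,d\tau$ produces exactly \eqref{transestimate}.

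Finally, for the time-continuity claim when $r=\infty$, I would exploit the uniform bound just obtained, which places $f$ in ${\bf L}^\infty((0,T);B^s_{p,\infty})$, together with the assumed continuity in $\mathcal{S}'$. Interpolating the $B^s_{p,\infty}$ bound against the $\mathcal{S}'$-continuity and invoking the locally compact embedding $B^s_{p,\infty}\hookrightarrow B^{s'}_{p,1}$ for $s'<s$ from Lemma \ref{besov}(3) upgrades the weak continuity to strong continuity, giving $f\in C([0,T],B^{s'}_{p,1})$.
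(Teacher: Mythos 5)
The paper offers no proof of this proposition: it is quoted directly from Danchin \cite{D1}, Proposition A.1, and the argument there is exactly the one you outline --- dyadic localization of the transport equation, an ${\bf L}^p$ energy estimate on each block, the commutator bound $\bigl\|\bigl(2^{qs}\|[v\partial_x,\Delta_q]f\|_{{\bf L}^p}\bigr)_{q\geq -1}\bigr\|_{\ell^r}\leq C\|\partial_x v\|_{B^{s-1}_{p,r}}\|f\|_{B^s_{p,r}}$ via Bony's decomposition (where $s>1+\frac{1}{p}$ guarantees $B^{s-1}_{p,r}\hookrightarrow {\bf L}^{\infty}$ so the gradient term is absorbed), followed by Gronwall. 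Your proposal is therefore correct and takes essentially the same approach as the cited source.
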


\subsection{Miscellaneous}
The following notions have been utilized in the proof of Theorem \ref{existmain}.
\begin{enumerate}
\item[I.] Friedrichs mollifier: Given a non-negative function $\varphi \in C_c^{\infty}(\R)$ such that $\int_{\R} \varphi dx = 1$, for any $\varepsilon>0$ we define a Friedrichs mollifier by
\begin{equation}\label{moll}
J_{\varepsilon} f (x) = \varepsilon^{-1}\int_{\R}\varphi\left(\frac{x-y}{\varepsilon}\right)f(y)dy
\end{equation}
where $f\in{\bf L}^p(\R)$ and $1\leq p\leq\infty$.
\item[II.] Operator $\Lambda$: For our problem, we define $\Lambda = 1-\partial_x^2$ . Then the system \eqref{FW} becomes
\begin{equation}
\begin{cases}
u_t + uu_x = \Lambda^{-1}[\partial_x\left(\rho - u\right)]~,~~ (t,x)\in\mathbb{R}^{+}\times\mathbb{R}\\
\rho_t + u\rho_x + \rho u_x + u_x = 0\\
\left(u, \rho\right)(0, x) = \left(u_0, \rho_0\right)(x)~,~~x\in \mathbb{R}
\end{cases}~.
\end{equation}
$\Lambda^{-1}$ is a well-defined operator whose Fourier transform is $\mathcal{F}\left(\Lambda^{-1}f\right) = \frac{1}{1+\xi^2}\hat{f}(\xi)$ for any test function $f$. Now, $\Lambda^{-1}$ is an $S^2$-multiplier. Therefore by Proposition \ref{multiplier}, for some constant $\theta$ depending on $s$, $p$ and $r$ it holds that
\begin{equation}\label{lambinv}
\|\Lambda^{-1}\partial_x\left(\rho^n - u^n\right)\|_{B^s_{p,r}} ~\leq~ \theta \|\rho^n - u^n\|_{B^{s-1}_{p,r}}~.
\end{equation}

\end{enumerate}

\section{Local Well-posedness}\label{sec3}
In this section, we prove existence and uniqueness of the solution to the FW system \eqref{FW}, and establish continuous dependence of its data-to-solution map in $B^s_{p,r} \times B^{s-1}_{p,r}$.
\subsection{Existence and Lifespan}\label{subsec1}
\begin{theorem}\label{existmain}
Let $s> \max\{2+\frac{1}{p}, \frac{5}{2}\}$, $p \in [1, \infty]$, $r \in [1, \infty)$ and $(u_0, \rho_0) \in B^s_{p,r} \times B^{s-1}_{p,r}$. Then the system \eqref{FW} has a solution $(u, \rho) \in C\left([0,T];B^s_{p,r} \times B^{s-1}_{p,r}\right)$ for 
$$T~=~  \frac{3}{16C\left(\|u_0\|_{B^s_{p,r}} + \|\rho_0\|_{B^{s-1}_{p,r}}\right)^2}~.$$
\end{theorem}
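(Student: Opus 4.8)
The plan is to build the solution as the limit of an iteration scheme in which the two coupled equations are linearized into transport equations, to obtain a uniform bound on a common time interval by means of the transport estimate of Proposition~\ref{danchinLT}, and finally to produce a genuine solution by a compactness argument. Concretely, I would set $(u^0,\rho^0)=(u_0,\rho_0)$ and, given the $n$-th iterate, define $(u^{n+1},\rho^{n+1})$ as the solution of the linear system
\begin{equation*}
\begin{cases}
\partial_t u^{n+1}+u^n\partial_x u^{n+1}=\Lambda^{-1}\partial_x(\rho^n-u^n),\\
\partial_t \rho^{n+1}+u^n\partial_x \rho^{n+1}=-(\rho^n+1)\partial_x u^n,\\
(u^{n+1},\rho^{n+1})(0,\cdot)=(u_0,\rho_0).
\end{cases}
\end{equation*}
The transport velocity is the \emph{previous} iterate $u^n$, so the equation for $u^{n+1}$ is a transport equation at regularity $B^s_{p,r}$ and the one for $\rho^{n+1}$ a transport equation at regularity $B^{s-1}_{p,r}$; the assumption $s>2+\tfrac1p$ guarantees $s-1>1+\tfrac1p$, which is exactly the threshold demanded by Proposition~\ref{danchinLT} at the lower level, so both iterates are well defined in the correct spaces. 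When $p$ or $r$ equals $\infty$ I would first regularize the data with the mollifier $J_\varepsilon$ of \eqref{moll} to produce smooth iterates and remove the regularization at the very end.

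The heart of the argument is a uniform bound, which is the content of Lemma~\ref{lifespan}. Writing $Y^n(t)=\|u^n(t)\|_{B^s_{p,r}}+\|\rho^n(t)\|_{B^{s-1}_{p,r}}$ and $Y_0=\|u_0\|_{B^s_{p,r}}+\|\rho_0\|_{B^{s-1}_{p,r}}$, I would apply \eqref{transestimate} to each equation, controlling the Lipschitz exponent by $\int_0^t\|\partial_x u^n\|_{B^{s-1}_{p,r}}\,d\tau\le\int_0^t\|u^n\|_{B^s_{p,r}}\,d\tau$. The velocity source is estimated by the multiplier bound \eqref{lambinv} together with the embedding $B^s_{p,r}\hookrightarrow B^{s-1}_{p,r}$ of Lemma~\ref{besov}(3), while the source $(\rho^n+1)\partial_x u^n$ of the $\rho$-equation is estimated by the product estimate Lemma~\ref{besov}(5) and the algebra property Lemma~\ref{besov}(4), both of which are available precisely because $s>\max\{2+\tfrac1p,\tfrac52\}$. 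Combining the two inequalities and summing yields a closed differential inequality of Bernoulli type for $Y^n$; comparing it with the scalar ODE $Y'=2CY^3$, $Y(0)=Y_0$, whose solution $Y(t)=Y_0(1-4CY_0^2t)^{-1/2}$ first reaches $2Y_0$ at $t=3/(16CY_0^2)$, a downward induction shows that the bound $Y^n(t)\le 2Y_0$ propagates from $n$ to $n+1$ on $[0,T]$ with $T=3/(16CY_0^2)$. This is the uniform bound on the stated lifespan.

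With the uniform bound in hand I would pass to the limit by compactness. The equations bound $(\partial_t u^n,\partial_t\rho^n)$ uniformly in a space one derivative lower, so the sequence is equicontinuous in time; together with the uniform bound in $B^s_{p,r}\times B^{s-1}_{p,r}$ and the locally compact embedding of Lemma~\ref{besov}(3), an Aubin--Lions / Arzel\`a--Ascoli argument extracts a subsequence converging, on every compact set, in $C([0,T];B^{s'}_{p,r}\times B^{s'-1}_{p,r})$ for each $s'<s$. Interpolating this strong convergence against the uniform high-regularity bound is enough to pass to the limit in the transport terms and the nonlinear products, so the limit $(u,\rho)$ solves \eqref{FW}. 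The Fatou property Lemma~\ref{besov}(6) then gives $(u,\rho)\in {\bf L}^\infty([0,T];B^s_{p,r}\times B^{s-1}_{p,r})$, and the final clause of Proposition~\ref{danchinLT} upgrades $L^\infty$ in time to the continuity $C([0,T];B^s_{p,r}\times B^{s-1}_{p,r})$ claimed in the statement.

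The step I expect to be the main obstacle is the uniform estimate. Because the two unknowns live at the different regularities $s$ and $s-1$ and are genuinely coupled, one must run the transport estimate simultaneously at both levels and verify that every source term—above all the quadratic term $\rho^n\partial_x u^n$—closes, with no loss of derivatives, into a single scalar differential inequality for $Y^n$ that can be controlled up to the stated time; this is exactly where the threshold $s>\max\{2+\tfrac1p,\tfrac52\}$ and the product estimate Lemma~\ref{besov}(5) are indispensable. A secondary subtlety is that compactness delivers convergence only in the weaker topology, so justifying the limit in the nonlinear terms relies on the interpolation step rather than on direct norm continuity.
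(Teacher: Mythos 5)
Your proposal follows essentially the same route as the paper: the same linearized transport iteration, the same uniform bound built on the ansatz $P_0\left(1-4CP_0^2t\right)^{-1/2}$ (equivalently your comparison ODE $Y'=2CY^3$, which reaches $2P_0$ exactly at $T=3/(16CP_0^2)$), and the same compactness--Fatou--continuity chain. The only cosmetic differences are that the paper always mollifies the data (taking $J_{n+1}u_0$, $J_{n+1}\rho_0$ as initial values and starting the iteration from $(u^0,\rho^0)=(0,0)$) rather than only when $p$ or $r$ is infinite, and that it establishes time-continuity of the limit by an $\varepsilon/3$ approximation argument instead of invoking the final clause of Proposition~\ref{danchinLT}.
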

\begin{proof}
Consider the sequences of smooth functions $\{u^n\}_{n\geq 0}$ and $\{\rho^n\}_{n\geq 0}$ with $u^0 = 0$ and $\rho^0 = 0$ that solve the following system of linear transport equations
\begin{equation}\label{transport}
\begin{cases}
u^{n+1}_t + u^nu^{n+1}_x = \Lambda^{-1}[ \partial_x\left(\rho^n - u^n\right)]\\
\rho^{n+1}_t + u^n \rho^{n+1}_x  = -\rho^nu^{n}_x - u^n_x\\
u^{n+1}(x,0) = J_{n+1}u_0 (x)\\
\rho^{n+1}(x,0) = J_{n+1}\rho_0 (x)\\
\end{cases}
\end{equation}
where $J_{n+1}$ is a Friedrichs mollifier as defined in \eqref{moll} and $\Lambda = 1-\partial_x^2$.
\medskip

\noindent First we need to show that solutions to the system \eqref{transport} are uniformly bounded on a common lifespan. Applying Proposition \ref{danchinLT}, for some constants $K_1$ and $K_2$ that depend on $s, p$ and $r$, we have 
\begin{multline}\label{est1}
\|u^{n+1}(t)\|_{B^s_{p,r}}~\leq~ e^{K_1V_n(t)} \|u_0\|_{B^s_{p,r}} ~+~\\
 K_1\int_0^t  e^{K_1V_n(t)-K_1V_n(\tau)}\|\Lambda^{-1}[ \partial_x\left(\rho^n - u^n\right)(\tau)]\|_{B^s_{p,r}}~d\tau
\end{multline}
and 
\begin{multline}\label{est2}
\|\rho^{n+1}(t)\|_{B^{s-1}_{p,r}}~\leq~ e^{K_2V_n(t)}\|\rho_0\|_{B^{s-1}_{p,r}} ~+~ \\
K_2\int_0^t  e^{K_2V_n(t)-K_2V_n(\tau)}\left(\|\rho^nu^{n}_x (\tau)\|_{B^{s-1}_{p,r}}+\|u^n_x(\tau)\|_{B^{s-1}_{p,r}}\right)~d\tau
\end{multline}
where 
\begin{equation}\label{Vn}
V_n (t) = \int_0^t \|u_x^{n}(\tau)\|_{B^{s-1}_{p,r}}~d \tau~\leq~ \int_0^t \|u^{n}(\tau)\|_{B^{s}_{p,r}}~d \tau~.
\end{equation}
\medskip

\noindent From \eqref{lambinv}, it follows that for some constant $M_1$, we have
\begin{equation}\label{est3}
\|\Lambda^{-1}\partial_x\left(\rho^n - u^n\right)\|_{B^s_{p,r}} \leq M_1\left(\|u^n\|_{B^s_{p,r}} + \|\rho^n\|_{B^{s-1}_{p,r}}\right)
\end{equation}
and by (6) in Lemma \ref{besov}, for some constant $M_2$, it holds that
\begin{equation}\label{est4}
\|\rho^nu^{n}_x\|_{B^{s-1}_{p,r}} \leq M_2\|u^n\|_{B^s_{p,r}}\|\rho^n\|_{B^{s-1}_{p,r}}~.
\end{equation}

\noindent Using \eqref{est3} in \eqref{est1}, \eqref{est4} in \eqref{est2} and setting $L_1 := \max\{K_1, M_1\}$, $L_2 := \max\{K_2, M_2\}$ we get
\begin{multline}\label{est5}
\|u^{n+1}(t)\|_{B^s_{p,r}}~\leq~ e^{L_1V_n(t)}\|u_0\|_{B^s_{p,r}} ~+~\\ L_1\int_0^t  e^{L_1V_n(t)-L_1V_n(\tau)}\left(\|u^n\|_{B^s_{p,r}} + \|\rho^n\|_{B^{s-1}_{p,r}}\right)~d\tau
\end{multline}
and 
\begin{align}\label{est6}
\begin{split}
\|\rho^{n+1}(t)\|_{B^{s-1}_{p,r}} & \leq~ e^{L_2V_n(t)}\|\rho_0\|_{B^{s-1}_{p,r}} ~+~L_2\int_0^t  e^{L_2V_n(t)-L_2V_n(\tau)}\|u^n\|_{B^s_{p,r}}\|\rho^n\|_{B^{s-1}_{p,r}}~d\tau\\ & + L_2\int_0^t  e^{L_2V_n(t)-L_2V_n(\tau)}\|u^n\|_{B^s_{p,r}}~d\tau\\
& \leq~ e^{L_2V_n(t)}\|\rho_0\|_{B^{s-1}_{p,r}} ~+~L_2\int_0^t  e^{L_2V_n(t)-L_2V_n(\tau)}\|u^n\|_{B^s_{p,r}}\|\rho^n\|_{B^{s-1}_{p,r}}~d\tau\\ & + L_2\int_0^t  e^{L_2V_n(t)-L_2V_n(\tau)}\left(\|u^n\|_{B^s_{p,r}} + \|\rho^n\|_{B^{s-1}_{p,r}}\right)~d\tau~.
\end{split}
\end{align}
Taking $C := 2\max\{L_1, L_2\}$ and using
\begin{equation*}
\|u^n\|_{B^s_{p,r}} \|\rho^n\|_{B^{s-1}_{p,r}} ~\leq~ \frac{\left(\|u^n\|_{B^s_{p,r}} + \|\rho^n\|_{B^{s-1}_{p,r}}\right)^2}{2}~,
\end{equation*}
we combine \eqref{est5} and \eqref{est6} to write
\begin{multline}\label{est7}
\|u^{n+1}(t)\|_{B^s_{p,r}} + \|\rho^{n+1}(t)\|_{B^{s-1}_{p,r}}~\leq~e^{CV_n(t)}\left(\|u_0\|_{B^s_{p,r}} + \|\rho_0\|_{B^{s-1}_{p,r}}\right)\\
 + C\int_0^t e^{CV_n(t) - CV_n(\tau)}\left(\|u^n\|_{B^s_{p,r}} + \|\rho^n\|_{B^{s-1}_{p,r}}\right)~d\tau\\
+ C\int_0^t e^{CV_n(t) - CV_n(\tau)} \frac{\left(\|u^n\|_{B^s_{p,r}} + \|\rho^n\|_{B^{s-1}_{p,r}}\right)^2}{2} ~d\tau~.
\end{multline}
Now we state and prove a lemma that provides the minimum lifespan. 
\begin{lemma}\label{lifespan}
There exists a minimum lifespan $T$ as stated in Theorem \ref{existmain}, such that for all $n\in \mathbb{N}$ and for all $t\in [0,T]$,
\begin{align}\label{ulife}
\|u^{n}(t)\|_{B^s_{p,r}} + \|\rho^{n}(t)\|_{B^{s-1}_{p,r}}&~\leq~ \frac{\|u_0\|_{B^s_{p,r}} + \|\rho_0\|_{B^{s-1}_{p,r}}}{\left(1- 4C\left(\|u_0\|_{B^s_{p,r}} + \|\rho_0\|_{B^{s-1}_{p,r}}\right)^2~t \right)^{\frac{1}{2}}} \nonumber \\& \leq \frac{\|u_0\|_{B^s_{p,r}} + \|\rho_0\|_{B^{s-1}_{p,r}}}{1- 4C\left(\|u_0\|_{B^s_{p,r}} + \|\rho_0\|_{B^{s-1}_{p,r}}\right)^2~t }
\end{align}
and
\begin{equation}\label{ulife1}
\|u^{n}(t)\|_{B^s_{p,r}} + \|\rho^{n}(t)\|_{B^{s-1}_{p,r}}~\leq~2\left(\|u_0\|_{B^s_{p,r}} + \|\rho_0\|_{B^{s-1}_{p,r}}\right)~.
\end{equation}
\end{lemma}
\noindent {\bf Proof of Lemma \ref{lifespan}:} We prove this result inductively. The cases $n=0$ and $n=1$ follow trivially. Set $P_0 = \|u_0\|_{B^s_{p,r}} + \|\rho_0\|_{B^{s-1}_{p,r}}$. For $n\in \mathbb{N}$, using \eqref{Vn} and \eqref{ulife} it follows that, for every $t\in[0,T]$
\begin{equation*}
V_n(t)~\leq~ -\frac{1}{4C}\ln\left(1- 4CP_0^2~t \right)~.
\end{equation*}
Then for every $t, \tau \in[0,T]$,
\begin{equation}\label{ev1}
e^{CV_n(t)}~\leq~ \left(1- 4CP_0^2~t \right)^{-\frac{1}{4}}
\end{equation}
which implies
\begin{equation}\label{ev2}
e^{CV_n(t)-CV_n(\tau)}~\leq~ \left(\frac{1- 4CP_0^2~\tau}{1- 4CP_0^2~t} \right)^{\frac{1}{4}}~.
\end{equation}
Substituting \eqref{ulife}, \eqref{ev1} and \eqref{ev2} in \eqref{est7} we get, 
\begin{equation}\label{comp}
\|u^{n+1}(t)\|_{B^s_{p,r}} + \|\rho^{n+1}(t)\|_{B^{s-1}_{p,r}}~\leq~\frac{P_0}{\left(1- 4CP_0^2~t \right)^{\frac{1}{4}}} + I_1 + I_2
\end{equation}
where
\begin{equation}\label{I1}
I_1 ~=~ \frac{C}{\left(1- 4CP_0^2~t \right)^{\frac{1}{4}}}\int_0^t \frac{P_0}{\left(1- 4CP_0^2~\tau \right)^{\frac{1}{4}}}~d\tau
\end{equation}
and 
\begin{equation}\label{I2}
I_2~=~\frac{C}{2\left(1- 4CP_0^2~t \right)^{\frac{1}{4}}}\int_0^t \frac{P_0^2}{\left(1- 4CP_0^2~\tau \right)^{\frac{3}{4}}}~d\tau~.
\end{equation}
Simplifying $I_1$, $I_2$ using the relation $\left(1- 4CP_0^2~t \right)^{\frac{1}{4}} \geq \left(1- 4CP_0^2~t \right)^{\frac{1}{2}}$ for any $t \in [0, T]$ and combining with \eqref{comp} yields
\begin{equation}
\|u^{n+1}(t)\|_{B^s_{p,r}} + \|\rho^{n+1}(t)\|_{B^{s-1}_{p,r}}~\leq~ \frac{P_0}{\left(1- 4CP_0^2~t \right)^{\frac{1}{2}}}~.
\end{equation}
This completes the proof of Lemma \ref{lifespan} by induction.\\
\quad\\
Next we show that $\{(u^n, \rho^n)\}_{n\geq0}$ converges to a solution $(u, \rho)$ of the system \eqref{FW}. We begin by using Arzela-Ascoli's theorem to find limit points $u\in C\left([0,T];B^{s-1}_{p,r}\right)$ and $\rho\in C\left([0,T];B^{s-2}_{p,r}\right)$ for $\{u^n\}_{n\geq0}$ and $\{\rho^n\}_{n\geq0}$ respectively.
\smallskip

\noindent   By Lemma \ref{lifespan}, $\{u^n\}_{n\geq0}$ is uniformly bounded in $C\left([0,T];B^{s}_{p,r}\right)$ and $\{ \rho^n\}_{n\geq0}$ is uniformly bounded in $C\left([0,T];B^{s-1}_{p,r}\right)$. Therefore, to apply Arzela-Ascoli's theorem, it is enough to show that $\{u^n\}_{n\geq0}$ is equicontinuous in $C\left([0,T];B^{s-1}_{p,r}\right)$ and $\{\rho^n\}_{n\geq 0}$ is equicontinuous in $C\left([0,T];B^{s-2}_{p,r}\right)$. Let $t_1, t_2 \in [0, T]$. By Mean Value theorem,
\begin{equation}\label{s2-1}
\|u^n(t_1) - u^n(t_2)\|_{B^{s-1}_{p,r}}~\leq~|t_1 - t_2|\sup\limits_{t\in[0,T]}\|u^n_t\|_{B^{s-1}_{p,r}}~.
\end{equation}
From \eqref{transport} we have
\begin{equation}
\|u^n_t\|_{B^{s-1}_{p,r}}~\leq~\|u^{n-1}u^{n}_x\|_{B^{s-1}_{p,r}} + \|\Lambda^{-1} \partial_x\left(\rho^{n-1} - u^{n-1}\right)\|_{B^{s-1}_{p,r}}~.
\end{equation}
As $B^{s-1}_{p,r}$ is an algebra, using \eqref{est3} the above becomes
\begin{equation}\label{s2-2}
\|u^n_t\|_{B^{s-1}_{p,r}}\leq~\|u^{n-1}\|_{B^{s-1}_{p,r}}\|u^{n}_x\|_{B^{s-1}_{p,r}} + M_1\left(\|u^{n-1}\|_{B^{s-1}_{p,r}} + \|\rho^{n-1}\|_{B^{s-2}_{p,r}}\right)~.
\end{equation}
Applying \eqref{ulife1} to \eqref{s2-2} and substituting the resulting inequality in \eqref{s2-1}, we get
\begin{equation}
\|u^n(t_1) - u^n(t_2)\|_{B^{s-1}_{p,r}}~\leq~\Gamma_1 \cdot |t_1 - t_2|~,
\end{equation}
where $\Gamma_1 = 2P_0(M_1 + 2P_0)$. Thus $\{u^n\}_{n\geq 0}$ is equicontinuous in $C\left([0,T];B^{s-1}_{p,r}\right)$ and converges to a limit $u$ in $C\left([0,T];B^{s-1}_{p,r}\right)$. Again, by Mean Value theorem,
\begin{equation}\label{s2-3}
\|\rho^n(t_1) - \rho^n(t_2)\|_{B^{s-2}_{p,r}}~\leq~|t_1 - t_2|\sup\limits_{t\in[0,T]}\|\rho^n_t\|_{B^{s-2}_{p,r}}~.
\end{equation}
Using \eqref{transport} we have
\begin{equation}
\|\rho^n_t\|_{B^{s-2}_{p,r}}~\leq~\|u^{n-1} \rho^{n}_x\|_{B^{s-2}_{p,r}} + \|\rho^{n-1}u^{n-1}_x\|_{B^{s-2}_{p,r}} + \|u^{n-1}_x\|_{B^{s-2}_{p,r}}
\end{equation}
and as $B^{s-2}_{p,r}$ is an algebra, from \eqref{est4} we get
\begin{equation}\label{s2-4}
\|\rho^n_t\|_{B^{s-2}_{p,r}}~\leq~\|u^{n-1}\|_{B^{s-2}_{p,r}}\|\rho^{n}_x\|_{B^{s-2}_{p,r}} + M_2\|u^{n-1}\|_{B^{s-1}_{p,r}}\|\rho^{n-1}\|_{B^{s-2}_{p,r}} + \|u^{n-1}\|_{B^{s-1}_{p,r}}~.
\end{equation}
Putting \eqref{ulife} in \eqref{s2-4} and substituting the result in \eqref{s2-3} yields
\begin{equation}
\|\rho^n(t_1) - \rho^n(t_2)\|_{B^{s-2}_{p,r}}~\leq~\Gamma_2\cdot |t_1 - t_2|~,
\end{equation}
where $\Gamma_2 = 2P_0[1+2(1+M_2)P_0]$. Hence $\{\rho^n\}_{n\geq 0}$ is equicontinuous in $C\left([0,T];B^{s-2}_{p,r}\right)$ and converges to a limit $\rho$ in $C\left([0,T];B^{s-2}_{p,r}\right)$. Using Cantor's diagonalization argument similar to \cite{D1}, for any test function $\varphi \in C_c^{\infty}(\R)$, as $n \to \infty$, $\|\varphi u_n (t) - \varphi u (t)\|_{B^{s-1}_{p,r}}$ and $\|\varphi \rho_n (t) - \varphi \rho (t)\|_{B^{s-2}_{p,r}}$ converge uniformly to $0$ on $[0,T]$. By the Fatou property of Besov spaces from Lemma \ref{besov}, for all $t \in [0,T]$  
$$\|u(t)\|_{B^{s}_{p,r}} \leq \kappa_1 \liminf\limits_{k\to\infty}\|u^{n}(t)\|_{B^{s}_{p,r}}~$$
and
$$\|\rho(t)\|_{B^{s-1}_{p,r}} \leq  \kappa_2 \liminf\limits_{k\to\infty}\|\rho^{n}(t)\|_{B^{s-1}_{p,r}}~$$
for some constants $\kappa_1$ and $\kappa_2$ that depend on $s$, $p$ and $r$. This implies $u \in {\bf L}^{\infty}\left([0,T];B^{s}_{p,r}\right)$ and $\rho \in {\bf L}^{\infty}\left([0,T];B^{s-1}_{p,r}\right)$. Now we show that $u \in C\left([0,T];B^{s}_{p,r}\right)$ and $\rho \in C\left([0,T];B^{s-1}_{p,r}\right)$. This implies verifying that for every $t \in (0, T)$,
\begin{equation}\label{ulim}
\lim_{\tau \to t} \|u(\tau) - u(t)\|_{B^{s}_{p,r}} = 0
\end{equation}
and
\begin{equation}\label{rholim}
\lim_{\tau \to t} \|\rho(\tau) - \rho(t)\|_{B^{s-1}_{p,r}} = 0~.
\end{equation} 
Let $\varepsilon >0$. To establish \eqref{ulim} we need to choose $\delta >0$ such that $\|u(\tau) - u(t)\|_{B^{s}_{p,r}} < \varepsilon$ whenever $|\tau -t|<\delta$. For any $n\in \mathbb{N}$, by triangle inequality
\begin{equation} \label{triangle1}
\|u(\tau) - u(t)\|_{B^{s}_{p,r}}~\leq~ \|u(\tau) - u^n(\tau)\|_{B^{s}_{p,r}} + \|u^n(\tau) - u^n(t)\|_{B^{s}_{p,r}} + \|u^n(t) - u(t)\|_{B^{s}_{p,r}}~.
\end{equation}
From the Fatou property in Lemma \ref{besov} it follows that $\{u^n\}_{n\geq 0}$ converges to $u$ in ${\bf L}^{\infty}\left([0,T];B^s_{p,r}\right)$, so there exists $N_0\in \mathbb{N}$ such that $\|u^n(\tau) - u(\tau)\|_{B^{s}_{p,r}} < \frac{\varepsilon}{3}$ and $\|u^n(t) - u(t)\|_{B^{s}_{p,r}} < \frac{\varepsilon}{3}$ for all $n \geq N_0$. Choosing $N>N_0$ sufficiently large, from \eqref{triangle1} we have
\begin{equation}\label{newtriangle1}
\|u(\tau) - u(t)\|_{B^{s}_{p,r}}~\leq~ \frac{2\varepsilon}{3} + \|u^{N}(\tau) - u^{N}(t)\|_{B^{s}_{p,r}}~.
\end{equation}
As $u^{N} \in C\left([0,T];B^{s}_{p,r}\right)$ by Lemma \ref{lifespan}, there exists $\delta>0$ depending on $N$ such that 
$\|u^{N}(\tau) - u^{N}(t)\|_{B^{s}_{p,r}} < \frac{\varepsilon}{3}$ whenever $|\tau -t|<\delta$. Therefore, \eqref{newtriangle1} yields \eqref{ulim}, and \eqref{rholim} follows by a similar argument.\\

\noindent Thus, we have $\left(u, \rho\right) \in C\left([0,T];B^{s}_{p,r} \times B^{s-1}_{p,r}\right)$, which proves the existence of a solution to the FW system \eqref{FW}.
\end{proof}

\subsection{Uniqueness} \label{subsec2}
The following proposition establishes uniqueness of the solution to \eqref{FW} by showing how a change in initial data affects the solution.\\

\begin{proposition}\label{uniqueness}
Let $s> \max\{2+\frac{1}{p}, \frac{5}{2}\}$, $p \in [1, \infty]$ and $r \in [1, \infty)$. Consider two solutions $(u^{(1)}, \rho^{(1)})$ and $(u^{(2)}, \rho^{(2)})$ of \eqref{FW} in $C\left([0,T];B^s_{p,r} \times B^{s-1}_{p,r}\right)$, corresponding to initial data $(u^{(1)}_{0},\rho^{(1)}_{0})$ and $(u^{(2)}_{0}, \rho^{(2)}_{0})$ respectively in $B^s_{p,r} \times B^{s-1}_{p,r}$. Let $w=u^{(1)} - u^{(2)}$, $v=\rho^{(1)} - \rho^{(2)}$, $w_0 = u^{(1)}_{0} - u^{(2)}_{0}$ and $v_0 = \rho^{(1)}_{0} - \rho^{(2)}_{0}$. Then for $\beta \in \mathbb{R}$ it holds that
\begin{equation}\label{uniq}
\|w(t)\|_{B^{s-1}_{p,r}} + \|v(t)\|_{B^{s-2}_{p,r}}~\leq~\left(\|w_0\|_{B^{s-1}_{p,r}} + \|v_0\|_{B^{s-2}_{p,r}}\right)e^{\beta t}~.
\end{equation}
\end{proposition}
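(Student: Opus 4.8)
The plan is to subtract the two copies of \eqref{FW}, obtain linear transport equations for the differences $w=u^{(1)}-u^{(2)}$ and $v=\rho^{(1)}-\rho^{(2)}$ driven by the common velocity $u^{(1)}$, apply the transport estimate of Proposition \ref{danchinLT} one regularity level below that of existence, and close a \emph{coupled} Gr\"onwall inequality for $\Phi(t):=\|w(t)\|_{B^{s-1}_{p,r}}+\|v(t)\|_{B^{s-2}_{p,r}}$. First I would subtract the two momentum equations and the two $\rho$-equations, rewriting each quadratic difference as $u^{(1)}u^{(1)}_x-u^{(2)}u^{(2)}_x=u^{(1)}w_x+w\,u^{(2)}_x$ and likewise for the mixed products. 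This gives
\begin{align*}
w_t+u^{(1)}w_x&=\Lambda^{-1}\partial_x(v-w)-w\,u^{(2)}_x=:F_1,\\
v_t+u^{(1)}v_x&=-\rho^{(1)}w_x-w\,\rho^{(2)}_x-v\,u^{(2)}_x-w_x=:F_2,
\end{align*}
two transport equations with drift $u^{(1)}$ and data $w_0,v_0$.

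Since $u^{(1)}\in C([0,T];B^s_{p,r})$ with $s>\max\{2+\tfrac1p,\tfrac52\}$, the drift is amply regular, and I would apply Proposition \ref{danchinLT} to the $w$-equation at level $s-1$ and to the $v$-equation at level $s-2$, with $V(t)=\int_0^t\|u^{(1)}(\tau)\|_{B^s_{p,r}}\,d\tau$, which is finite and bounded on $[0,T]$. Then I would estimate the sources. For $F_1$, the multiplier bound \eqref{lambinv} yields $\|\Lambda^{-1}\partial_x(v-w)\|_{B^{s-1}_{p,r}}\leq\theta\|v-w\|_{B^{s-2}_{p,r}}\leq\theta(\|v\|_{B^{s-2}_{p,r}}+\|w\|_{B^{s-1}_{p,r}})$ via $B^{s-1}_{p,r}\hookrightarrow B^{s-2}_{p,r}$, while the algebra property of $B^{s-1}_{p,r}$ controls $w\,u^{(2)}_x$ by $M\|w\|_{B^{s-1}_{p,r}}\|u^{(2)}\|_{B^s_{p,r}}$. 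For $F_2$, since $s-2>\tfrac1p$ the space $B^{s-2}_{p,r}$ is a Banach algebra by Lemma \ref{besov}(4), so each bilinear term is bounded by a product of norms, e.g.\ $\|\rho^{(1)}w_x\|_{B^{s-2}_{p,r}}\leq M\|\rho^{(1)}\|_{B^{s-1}_{p,r}}\|w\|_{B^{s-1}_{p,r}}$, and the linear term $w_x$ by $\|w\|_{B^{s-1}_{p,r}}$. Using that the four solution norms are bounded on $[0,T]$, this gives $\|F_1\|_{B^{s-1}_{p,r}}+\|F_2\|_{B^{s-2}_{p,r}}\leq A\,\Phi$ for a constant $A$ depending only on $s,p,r$ and those norms.

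To close, I would add the two integral inequalities and set $h(t)=e^{-CV(t)}\Phi(t)$, obtaining $h(t)\leq\Phi(0)+CA\int_0^t h(\tau)\,d\tau$. Gr\"onwall's inequality then yields $\Phi(t)\leq\Phi(0)e^{CV(t)}e^{CAt}\leq\Phi(0)e^{\beta t}$ with $\beta=C\big(\sup_{[0,T]}\|u^{(1)}\|_{B^s_{p,r}}+A\big)$, which is exactly \eqref{uniq}; taking $w_0=v_0=0$ forces $\Phi\equiv0$ and gives uniqueness.

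The main obstacle is the derivative loss that forces the whole estimate one level below the existence regularity: $F_2$ contains $\rho^{(1)}w_x$ and $w_x$, so a gradient of $w$ must be absorbed, which is only affordable if $w$ is measured in $B^{s-1}_{p,r}$ and $v$ in $B^{s-2}_{p,r}$; simultaneously the coupling term $\Lambda^{-1}\partial_x(v-w)$ feeds $v$ back into the $w$-equation, so the two estimates cannot be decoupled and must be summed before applying Gr\"onwall. The most delicate point to verify is that the transport estimate genuinely applies to the $v$-equation at the reduced index $s-2$, which need not exceed $1+\tfrac1p$; this lies outside the regime stated in Proposition \ref{danchinLT} and instead requires its companion low-regularity form, in which the drift enters through $\|\partial_x u^{(1)}\|_{B^{1/p}_{p,\infty}\cap{\bf L}^\infty}$ (controlled by $\|u^{(1)}\|_{B^s_{p,r}}$ since $s-1>\tfrac1p$), while the algebra condition $s-2>\tfrac1p$ keeps the source terms in $B^{s-2}_{p,r}$.
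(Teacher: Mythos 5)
Your proposal is correct and follows essentially the same route as the paper: subtract the two systems to get a coupled linear transport system for $(w,v)$, estimate $w$ in $B^{s-1}_{p,r}$ and $v$ in $B^{s-2}_{p,r}$ via Proposition \ref{danchinLT} together with the algebra/product properties of Lemma \ref{besov} and the bound \eqref{lambinv}, sum the two estimates, and close with Gr\"onwall (the paper transports with drift $u^{(2)}$ rather than $u^{(1)}$, an immaterial symmetric choice). Your closing remark is well taken: since the hypothesis only gives $s>\max\{2+\tfrac1p,\tfrac52\}$, the index $s-2$ need not exceed $1+\tfrac1p$, so the $v$-equation genuinely requires the low-regularity companion of the transport estimate — a point the paper's proof passes over silently.
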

\begin{proof}
First, we observe that $(w,v)$ satisfies the system of linear transport equations given by
\begin{equation}\label{uniqtransp}
\begin{cases}
\partial_t w + u^{(2)}\partial_x w = -w\partial_x u^{(1)} + \Lambda^{-1}[ \partial_x\left(v-w\right)]\\
\partial_t v + u^{(2)}\partial_x v = -w\partial_x \rho^{(1)} - v \partial_x u^{(1)} - \rho^{(2)}\partial_x w - \partial_x w
\end{cases}~.
\end{equation}
From Proposition \ref{danchinLT}, we obtain a linear transport estimate for each equation in system \eqref{uniqtransp}. Combining these estimates and using properties (4) and (5) from Lemma \ref{besov}, it holds that
\begin{multline}\label{unicalc1}
\|w(t)\|_{B^{s-1}_{p,r}} + \|v(t)\|_{B^{s-2}_{p,r}}~\leq~\left(\|w_0\|_{B^{s-1}_{p,r}} + \|v_0\|_{B^{s-2}_{p,r}}\right)e^{R\int_0^t \|\partial_x u^{(2)}(\tau')\|_{B^{s-2}_{p,r}}~d\tau'} + \\
R\int_0^te^{R\int_0^t \|\partial_x u^{(2)}(\tau')\|_{B^{s-2}_{p,r}}~d\tau'}\left(\|w(\tau)\|_{B^{s-1}_{p,r}} + \|v(\tau)\|_{B^{s-2}_{p,r}}\right)\cdot \\
\left(\|u^{(1)}(\tau)\|_{B^{s}_{p,r}}+\|\rho^{(1)}(\tau)\|_{B^{s-1}_{p,r}}+\|u^{(2)}(\tau)\|_{B^{s}_{p,r}}+\|\rho^{(2)}(\tau)\|_{B^{s-1}_{p,r}}\right)d\tau
\end{multline}
for some constant $R$ which depends on $s$, $p$ and $r$.
Differentiating \eqref{unicalc1} with respect to $t$, using the Fundamental Theorem of Calculus we get
\begin{multline}\label{groncalc}
\frac{d}{dt}\left[\left(\|w(t)\|_{B^{s-1}_{p,r}} + \|v(t)\|_{B^{s-2}_{p,r}}\right)e^{-R\int_0^t \|\partial_x u^{(2)}(\tau')\|_{B^{s-2}_{p,r}}~d\tau'}\right] \\
~\leq~ Re^{-R\int_0^t \|\partial_x u^{(2)}(\tau')\|_{B^{s-2}_{p,r}}~d\tau'}\left(\|w(t)\|_{B^{s-1}_{p,r}} + \|v(t)\|_{B^{s-2}_{p,r}}\right)\cdot \\
\left(\|u^{(1)}(t)\|_{B^{s}_{p,r}}+\|\rho^{(1)}(t)\|_{B^{s-1}_{p,r}}+\|u^{(2)}(t)\|_{B^{s}_{p,r}}+\|\rho^{(2)}(t)\|_{B^{s-1}_{p,r}}\right)~.
\end{multline}
Applying Gronwall's inequality on \eqref{groncalc} yields \eqref{uniq}, which proves that the solution to the FW system \eqref{FW} is unique.
\end{proof}

\subsection{Continuous dependence on initial data} \label{subsec3}

Consider initial data $(u_0, \rho_0) \in B^s_{p,r} \times B^{s-1}_{p,r}$ and let $(u, \rho)$ be its corresponding solution to the FW system \eqref{FW}. Let $\{{u_0}^j\}_{j\geq 0}$ converge to $u_0$ in $B^s_{p,r}$ and $\{{\rho_0}^j\}_{j\geq 0}$ converge to $\rho_0$ in $B^{s-1}_{p,r}$. Let solutions to \eqref{FW} corresponding to initial data $\{\left({u_0}^j, {\rho_0}^j \right)\}_{j\geq 0}$ be given by $\{\left({u}^j, {\rho}^j \right)\}_{j\geq 0}$. To establish continuity of the data-to-solution map, we need to prove that $\{\left({u}^j, {\rho}^j \right)\}_{j\geq 0}$ converges to $(u, \rho)$ in $C\left([0,T];B^{s}_{p,r} \times B^{s-1}_{p,r}\right)$, i.e.

\begin{equation}\label{ulimfin}
\lim_{j \to \infty} \|u^j - u\|_{C\left([0, T]; B^{s}_{p,r}\right)} = 0
\end{equation}
and
\begin{equation}\label{rholimfin}
\lim_{j \to \infty} \|\rho^j - \rho\|_{C\left([0, T]; B^{s-1}_{p,r}\right)} = 0~.
\end{equation} 
\\Let $\varepsilon > 0$. Suppose $\left({u_\varepsilon}^j, {\rho_\varepsilon}^j \right) $ denotes the solution to the FW system \eqref{FW} corresponding to mollified initial data $\left(J_{1/\varepsilon} {u_0}^j, J_{1/\varepsilon} {\rho_0}^j\right)$ and $\left({u_\varepsilon}, \rho_{\varepsilon}\right)$ denotes the solution to \eqref{FW} corresponding to mollified initial data $\left(J_{1/\varepsilon} {u_0}, J_{1/\varepsilon} {\rho_0}\right)$. By triangle inequality,
\begin{multline}\label{fineq1}
\|u^j - u\|_{C\left([0, T]; B^{s}_{p,r}\right)} \leq \|u^j - {u_\varepsilon}^j\|_{C\left([0, T]; B^{s}_{p,r}\right)} + \|{u_\varepsilon}^j - {u_\varepsilon}\|_{C\left([0, T]; B^{s}_{p,r}\right)}\\ + \|{u_\varepsilon} - u\|_{C\left([0, T]; B^{s}_{p,r}\right)}~.
\end{multline}
The first and the last terms on the right hand side of \eqref{fineq1} are identical in nature, therefore it is enough to estimate one of them. Without loss of generality, we find an estimate for the last term. Let $\left(u^n, \rho^n \right)$ be the approximate solution to the linear transport system \eqref{transport} corresponding to initial data $\left( J_n u_0, J_n \rho_0 \right)$. Then we have
\begin{equation}\label{fineq2}
\|{u_\varepsilon} - u\|_{C\left([0, T]; B^{s}_{p,r}\right)} \leq \|{u_\varepsilon} - u^n\|_{C\left([0, T]; B^{s}_{p,r}\right)} + \|u^n - u\|_{C\left([0, T]; B^{s}_{p,r}\right)}~.
\end{equation}
From subsection \ref{subsec1}, we have that $\lim_{n \to \infty} \|u^n - u\|_{C\left([0, T]; B^{s}_{p,r}\right)} = 0$. So there exists $P_1 \in \mathbb{N}$ such that $\|u^n - u\|_{C\left([0, T]; B^{s}_{p,r}\right)} < \frac{\varepsilon}{6}$ for all $n\geq P_1$.\\

\noindent Let $\left({u_\varepsilon}^n, {\rho_\varepsilon}^n \right) $ denote the approximate solution to system \eqref{transport} corresponding to mollified initial data $(J_n J_{1/\varepsilon} {u_0}, J_n J_{1/\varepsilon} {\rho_0})$. Then from the first term on the right hand side of \eqref{fineq2}, we get
\begin{equation}\label{fineq3}
\|{u_\varepsilon} - u^n\|_{C\left([0, T]; B^{s}_{p,r}\right)} \leq \|{u_\varepsilon} - {u_{\varepsilon}}^n\|_{C\left([0, T]; B^{s}_{p,r}\right)} + \|{u_{\varepsilon}}^n - u^n\|_{C\left([0, T]; B^{s}_{p,r}\right)}~.
\end{equation}
As subsection \ref{subsec1} shows that $\lim_{n \to \infty} \|{u_\varepsilon}^n - {u_\varepsilon}\|_{C\left([0, T]; B^{s}_{p,r}\right)} = 0$, there exists $P_2 \in \mathbb{N}$ such that $\|{u_\varepsilon}^n - {u_\varepsilon}\|_{C\left([0, T]; B^{s}_{p,r}\right)} < \frac{\varepsilon}{12}$ for all $n\geq P_2$.\\

\noindent Define ${w_\varepsilon}^n = {u_{\varepsilon}}^n - u^n$ and ${v_\varepsilon}^n = {\rho_{\varepsilon}}^n - \rho^n$. Then $\left({w_\varepsilon}^n, {v_\varepsilon}^n \right)$ is a solution to the linear transport system \eqref{transport} with initial data
$$\begin{cases}
{w_\varepsilon}^n (x,0) = J_n J_{1/\varepsilon} {u_0} - J_n u_0\\
{v_\varepsilon}^n (x,0) = J_n J_{1/\varepsilon} {\rho_0} - J_n \rho_0
\end{cases}~.$$
Choosing $1/\varepsilon$ sufficiently large and using the linear transport estimate from Proposition \ref{danchinLT}, we get
$\|{w_\varepsilon}^n\|_{C\left([0, T]; B^{s}_{p,r}\right)} \leq \|J_n J_{1/\varepsilon} {u_0} - J_n u_0\|_{C\left([0, T]; B^{s}_{p,r}\right)} < \frac{\varepsilon}{12}~.$
Therefore, it follows from \eqref{fineq3} that $\|{u_\varepsilon} - u^n\|_{C\left([0, T]; B^{s}_{p,r}\right)} < \frac{\varepsilon}{6}$ for all $n\geq P_2$. Take $P = \max\{P_1, P_2\}$. Then \eqref{fineq2} implies $\|{u_\varepsilon} - u\|_{C\left([0, T]; B^{s}_{p,r}\right)} < \frac{\varepsilon}{3}$ and \eqref{fineq1} yields that for all $j\geq P$,
\begin{equation}\label{fineq4}
\|u^j - u\|_{C\left([0, T]; B^{s}_{p,r}\right)} < \frac{\varepsilon}{3} +  \|{u_\varepsilon}^j - {u_\varepsilon}\|_{C\left([0, T]; B^{s}_{p,r}\right)} + \frac{\varepsilon}{3}~.
\end{equation}
Since the mollified initial data $\left(J_{1/\varepsilon} {u_0}^j, J_{1/\varepsilon} {\rho_0}^j\right)$ and $\left(J_{1/\varepsilon} {u_0}, J_{1/\varepsilon} {\rho_0}\right)$ are in $B^{s+1}_{p,r} \times B^{s}_{p,r}$, the corresponding solutions $\left({u_\varepsilon}^j, {\rho_\varepsilon}^j \right) $ and $\left({u_\varepsilon}, \rho_{\varepsilon}\right)$ belong to $C\left([0, T]; B^{s+1}_{p,r} \times B^{s}_{p,r} \right)$. Take ${w_\varepsilon}^j = {u_{\varepsilon}}^j - u_{\varepsilon}$ and ${v_\varepsilon}^j = {\rho_{\varepsilon}}^j - \rho_{\varepsilon}$. Then $\left(w_{\varepsilon}^j, v_{\varepsilon}^j  \right)$ satisfies the system of linear transport equations given by
\begin{equation}\label{uniqtrans}
\begin{cases}
\partial_t {w_\varepsilon}^j + {u_\varepsilon}\partial_x {w_\varepsilon}^j = -{w_\varepsilon}^j\partial_x {u_\varepsilon}^j + \Lambda^{-1}[ \partial_x\left({v_\varepsilon}^j - {w_\varepsilon}^j \right)]\\
\partial_t {v_\varepsilon}^j + {u_\varepsilon}\partial_x {v_\varepsilon}^j = -{w_\varepsilon}^j \partial_x {\rho_\varepsilon}^j - {v_\varepsilon}^j \partial_x {u_\varepsilon}^j - {\rho_\varepsilon} \partial_x {w_\varepsilon}^j - \partial_x {w_\varepsilon}^j
\end{cases}~.
\end{equation}
Applying Proposition \ref{danchinLT} on the first equation in system \eqref{uniqtrans}, we obtain 
\begin{equation}
\|{u_{\varepsilon}}^j - u_{\varepsilon}\|_{B^{s}_{p,r}} \leq \|{u_0}^j - {u_0}\|_{B^{s}_{p,r}}~.
\end{equation}
As $\{{u_0}^j\}_{j\geq 0}$ converges to $u_0$, there exists $p \in \mathbb{N}$ such that $\|{u_{\varepsilon}}^j - u_{\varepsilon}\|_{B^{s}_{p,r}} < \frac{\varepsilon}{3}$ for all $j \geq p$. Take $p_0 = \max\{P, p \}$. Then \eqref{fineq4} implies that for every $j\geq p_0$, $$\|u^j - u\|_{C\left([0, T]; B^{s}_{p,r}\right)} < \varepsilon~,$$ which proves \eqref{ulimfin}. Similarly, we have
\begin{multline}\label{fineq5}
\|\rho^j - \rho\|_{C\left([0, T]; B^{s-1}_{p,r}\right)} \leq \|\rho^j - {\rho_\varepsilon}^j\|_{C\left([0, T]; B^{s-1}_{p,r}\right)} + \|{\rho_\varepsilon}^j - {\rho_\varepsilon}\|_{C\left([0, T]; B^{s-1}_{p,r}\right)}\\ + \|{\rho_\varepsilon} - \rho\|_{C\left([0, T]; B^{s-1}_{p,r}\right)}~.
\end{multline}
To estimate the last term on the right hand side, triangle inequality implies
\begin{equation}\label{fineq6}
\|{\rho_\varepsilon} - \rho\|_{C\left([0, T]; B^{s-1}_{p,r}\right)} \leq \|{\rho_\varepsilon} - \rho^n\|_{C\left([0, T]; B^{s-1}_{p,r}\right)} + \|\rho^n - \rho\|_{C\left([0, T]; B^{s-1}_{p,r}\right)}~.
\end{equation}
Again $\lim_{n \to \infty} \|\rho^n - \rho\|_{C\left([0, T]; B^{s-1}_{p,r}\right)} = 0$ by subsection \ref{subsec1}, so there exists $Q_1 \in \mathbb{N}$ such that $\|\rho^n - \rho\|_{C\left([0, T]; B^{s-1}_{p,r}\right)} < \frac{\varepsilon}{6}$ for all $n\geq Q_1$. The first term on the right hand side of \eqref{fineq6} yields
\begin{equation}\label{fineq7}
\|{\rho_\varepsilon} - \rho^n\|_{C\left([0, T]; B^{s-1}_{p,r}\right)} \leq \|{\rho_\varepsilon} - {\rho_{\varepsilon}}^n\|_{C\left([0, T]; B^{s-1}_{p,r}\right)} + \|{\rho_{\varepsilon}}^n - \rho^n\|_{C\left([0, T]; B^{s-1}_{p,r}\right)}~.
\end{equation}
Following a procedure similar to that for \eqref{fineq3}, there exists $Q_2 \in \mathbb{N}$ such that $\|{\rho_\varepsilon}^n - {\rho_\varepsilon}\|_{C\left([0, T]; B^{s-1}_{p,r}\right)} < \frac{\varepsilon}{12}$ for all $n\geq Q_2$.\\

\noindent Recall that ${w_\varepsilon}^n = {u_{\varepsilon}}^n - u^n$ and ${v_\varepsilon}^n = {\rho_{\varepsilon}}^n - \rho^n$. As $\left({w_\varepsilon}^n, {v_\varepsilon}^n \right)$ is a solution to system \eqref{transport} with initial data
$$\begin{cases}
{w_\varepsilon}^n (x,0) = J_n J_{1/\varepsilon} {u_0} - J_n u_0\\
{v_\varepsilon}^n (x,0) = J_n J_{1/\varepsilon} {\rho_0} - J_n \rho_0
\end{cases}~,$$
we use the linear transport estimate from Proposition \ref{danchinLT} again and choose $1/\varepsilon$ sufficiently large, to get
$$\|{v_\varepsilon}^n\|_{C\left([0, T]; B^{s-1}_{p,r}\right)} \leq \|J_n J_{1/\varepsilon} {\rho_0} - J_n \rho_0\|_{C\left([0, T]; B^{s-1}_{p,r}\right)} < \frac{\varepsilon}{12}~.$$
Replacing this in \eqref{fineq7} yields that $\|{\rho_\varepsilon} - \rho^n\|_{C\left([0, T]; B^{s-1}_{p,r}\right)} < \frac{\varepsilon}{6}$ for all $n\geq Q_2$. Set $Q = \max\{Q_1, Q_2\}$. Then we have $\|{\rho_\varepsilon} - \rho\|_{C\left([0, T]; B^{s-1}_{p,r}\right)} < \frac{\varepsilon}{3}$ from \eqref{fineq6}. Consequently, \eqref{fineq5} implies that for all $j\geq Q$,
\begin{equation}\label{fineq8}
\|\rho^j - \rho\|_{C\left([0, T]; B^{s-1}_{p,r}\right)} < \frac{\varepsilon}{3} +  \|{\rho_\varepsilon}^j - {\rho_\varepsilon}\|_{C\left([0, T]; B^{s-1}_{p,r}\right)} + \frac{\varepsilon}{3}~.
\end{equation}
Now, using Proposition \ref{danchinLT} for the second equation in \eqref{uniqtrans} we obtain 
\begin{equation}
\|{\rho_{\varepsilon}}^j - \rho_{\varepsilon}\|_{B^{s-1}_{p,r}} \leq \|{\rho_0}^j - {\rho_0}\|_{B^{s-1}_{p,r}}~.
\end{equation}
Since $\{{\rho_0}^j\}_{j\geq 0}$ converges to $\rho_0$, there exists $q \in \mathbb{N}$ such that $\|{\rho_{\varepsilon}}^j - \rho_{\varepsilon}\|_{B^{s-1}_{p,r}} < \frac{\varepsilon}{3}$ for all $j \geq q$. Let $q_0 = \max\{Q, q \}$. Then from \eqref{fineq8}, it follows that for every $j\geq q_0$, $$\|\rho^j - \rho\|_{C\left([0, T]; B^{s-1}_{p,r}\right)} < \varepsilon~,$$ thus proving \eqref{rholimfin}.
\bigskip

\noindent This completes the proof of local well-posedness for the two-component Fornberg-Whitham system \eqref{FW} in Besov spaces $B^s_{p,r} \times B^{s-1}_{p,r}$ for $s > \max\{2+\frac{1}{p}, \frac{5}{2}\}$.
\vspace{0.3cm}
\subsection*{Acknowledgments}

I thank Barbara Lee Keyfitz and John Holmes, from The Ohio State University, for their valuable suggestions on this project. 



\begin{thebibliography}{99}

\bibitem{BCD} H. Bahouri, J. Chemin, R. Danchin  (2011)
Fourier Analysis and Nonlinear Partial Differential Equations, 
\emph{Springer-Verlag, Berlin, Heidelberg}.
\bibitem{CI} A. Constantin, R.I. Ivanov (2008)
On an integrable two-component Camassa–Holm shallow water system,
\emph{Phys. Lett. A}, \textbf{372}, 7129-7132.
\bibitem{D1} R. Danchin (2001)
A few remarks on the Camassa-Holm equation, 
\emph{Differential Integral Equations}, \textbf{14}, no. 8, 953-988.
\bibitem{D2} R. Danchin (2005)
Fourier Analysis Method for PDEs, 
\emph{Lecture Notes}, https://perso.math.u-pem.fr/danchin.raphael/cours/courschine.pdf.
\bibitem{EEP} M. Ehrnstr\"om, J. Escher, L. Pei (2015)
 A note on the local well-posedness for the Whitham equation, 
\emph{Elliptic and Parabolic Equations, Springer Proc. Math. Stat.}, \textbf{119}, 63-75.
\bibitem{FTYY} X. Fan, L. Tian, S. Yang, J. Yin (2011)
Bifurcations of traveling wave solutions for a two-component Fornberg–Whitham equation,
\emph{Commun. Nonlinear Sci. Numer. Simulat.}, \textbf{16}, 3956-3963.
\bibitem{FTZ} X. Fan, L. Tian, J. Zhou (2010)
Soliton, kink and antikink solutions of a 2-component of the Degasperis–Procesi equation,
\emph{Nonlinear Anal. Real World Appl.}, \textbf{11}, no. 4, 2529-2536.
\bibitem{FW} B. Fornberg, G.B. Whitham (1978)
A numerical and theoretical study of certain nonlinear wave phenomena,
\emph{Philos. Trans. R. Soc. London, Ser. A}, \textbf{289}, 373-404.
\bibitem{H} J.M. Holmes (2016)
Well-posedness of the Fornberg–Whitham equation on the circle,
\emph{J. Differential Equations}, \textbf{260}, no. 12, 8530-8549.
\bibitem{HT}  J. Holmes, R.C. Thompson (2017)
Well-posedness and continuity properties of the Fornberg-Whitham equation in Besov spaces,
\emph{J. Differential Equations}, \textbf{263}, no. 7, 4355-4381.
\bibitem{XZL} F. Li, F. Xu, Y. Zhang (2021)
The well-posedness, blow-up, and traveling waves for a two-component Fornberg-Whitham system,
\emph{J. Math. Phys.}, \textbf{62} 041505, no. 4, 1-17.
\bibitem{W} G.B. Whitham (1967)
Variational Methods and Applications to Water Waves,
\emph{Proc. R. Soc. Lond. A}, \textbf{299}, 6-25.
\end{thebibliography}

\end{document}